\makeatletter\@addtoreset{equation}{section}\makeatother
\newcommand{\C}{\mathbb{C}}
\newcommand{\drm}{\mathrm{d}}
\newtheorem{thm}{Theorem}[section] 
\newtheorem{cor}[thm]{Corollary}
\newtheorem{exmp}{Example}[section]
\begin{document}

\title{
On the lattice property of the Koopman operator spectrum
}
\author{Jason J. Bramburger}

\affil{\small Department of Mathematics and Statistics, Concordia University, Montr\'eal, QC, Canada}

\date{}
\maketitle

\begin{abstract}
The Koopman operator has become a celebrated tool in modern dynamical systems theory for analyzing and interpreting both models and datasets. The linearity of the Koopman operator means that important characteristics about it, and in turn its associated nonlinear system, are captured by its eigenpairs and more generally its spectrum. Many studies point out that the spectrum of the Koopman operator has a multiplicative lattice structure by which eigenvalues and eigenfunctions can be multiplied to produce new eigenpairs. However, these observations fail to resolve whether the new eigenfunction remains in the domain of the Koopman operator. In this work, we prove that the spectrum of the Koopman operator associated to discrete-time dynamical systems has a multiplicative lattice structure. We further demonstrate that the Koopman operator associated to discrete-time stochastic process does not necessarily have such a structure, demonstrating an important nuance that lies at the heart of Koopman operator theory.  
\end{abstract}

\maketitle


\section{Introduction}

Since its inception nearly a century ago, the composition operator introduced by Koopman \cite{koopman1931hamiltonian} has found widespread application throughout dynamical systems to analyze complex behaviour \cite{brunton2022modern}. The now-termed Koopman operator provides an equivalent formulation of a finite-dimensional nonlinear dynamical system as a linear operator acting on an infinite-dimensional space of functions. The advantage of the linear Koopman description of the dynamical system is that its temporal evolution can be completely understood in terms of the operator's spectrum, with eigenfunctions characterizing coherent structures in the system that can only grow or decay exponentially in time according to their associated eigenvalues. This recent growth in interest around the Koopman operator has primarily been driven by computational methods that seek to approximate it using only data gathered from the system \cite{williams2015data}, as well as its connection to dynamic mode decomposition \cite{schmid2010dynamic}. The result has been the development of robust computational methods that are inspired by the Koopman operator to forecast data, identify coherent structures, and discover conservation laws; see \cite{bramburger2024data} and the references therein for a full introduction.

The general setting is as follows. Consider a discrete-time dynamical $x_{n+1} = Tx_n$ with $T:M \to M$ and $M$ a topological space. The {\em Koopman operator}, denoted $K$, acts on functions $f:M \to \C$ by 
\begin{equation}
	[Kf](x) = f(Tx),	
\end{equation} 
for each $x \in M$. It is easy to see that the Koopman operator is linear and so a function $f_\lambda$ is an eigenfunction of $K$ with eigenvalue $\lambda \in \C$ if $f_\lambda(T(x)) = \lambda f_\lambda(x)$ for all $x \in M$. Eigenfunctions are critical to understanding the dynamics of $T$ since the set $\{x\in M:\ f_\lambda(x) = 0\}$ with nonzero $\lambda$ is an invariant set of the dynamical system $T$, while eigenfunctions with $\lambda = 1$ are conserved quantities of the dynamics. Furthermore, an important property of the Koopman operator is that eigenfunctions can be multiplied to form new eigenfunctions \cite{budivsic2012applied}. Indeed, if $(\lambda,f_\lambda)$ and $(\eta,g_\eta)$ are eigenpairs of $K$, then it follows that 
\begin{equation}
    \begin{split}
	[Kf_\lambda g_\eta](x) &= f_\lambda(Tx)g_\eta(Tx) \\ 
    &= \lambda f_\lambda(x) \eta g_\eta(x) \\ 
    &= \lambda\eta f_\lambda(x)g_\eta(x), \quad \forall x \in M,
    \end{split}
\end{equation} 
showing that $(\lambda\eta,f_\lambda g_\eta)$ is an eigenpair of $K$ as well. Hence, the eigenvalues of the Koopman operator form a lattice with respect to multiplication.

The above is purely formal, since a proper discussion of the eigenvalues of a linear operator, and more generally its spectrum, requires a definition of the domain and range of the operator. A natural setting is to consider the Hilbert space 
\[
    L^2 := L^2(M,\mu) = \bigg\{f:M \to \mathbb{C}\ \bigg|\ \int_M |f(x)|^2\drm \mu(x) < \infty\bigg\}
\]
with a given probability measure $\mu$ on $M$ such that $Kf \in L^2$ for all $f \in L^2$. The dynamical system $x_{n+1} = Tx_n$ does not need to be measure-preserving with respect to $\mu$. In fact, data-driven approximations of $K$ take $\mu$ as the reference measure from which data is drawn and convergence as data points increase \cite{bramburger2024auxiliary,korda2018convergence,klus2016numerical} is provided in the $L^2$ norm 
\[
    \|f\|_2 := \sqrt{\int_M |f(x)|^2\drm \mu(x)}.
\] 
While ergodic or physical measures of $T$ are good candidates for $\mu$, other typical examples include the normalized Lebesgue measure when $M$ is compact or the Gaussian measure when $M = \mathbb{R}^d$. An important point that has drawn criticism from working in $L^2$ is that it is not a Banach algebra with respect to pointwise multiplication \cite{gonzalez2021kernel}, meaning that even if $f_\lambda,g_\eta \in L^2$ are eigenfunctions of $K$, it cannot be guaranteed that $f_\lambda g_\eta \in L^2$. Despite this drawback, $L^2$ is an optimal setting for the Koopman operator since this well-studied Hilbert space allows one to precisely define best approximations, projections, and orthonormal bases, all of which are important for data-driven techniques involving the Koopman operator. 

In this brief communication we attempt to reconcile some of these issues related to posing the Koopman operator on $L^2$. We show that the {\em spectrum} of $K:L^2 \to L^2$, as described by the set 
\begin{equation}
	\sigma(K) := \{\lambda \in \C\ |\ (K - \lambda):L^2 \to L^2\ \mathrm{has\ no\ bounded\ inverse}\},
\end{equation}
exhibits the above-described lattice property. The set of eigenvalues of $K$ belongs to the spectrum $\sigma(K)$, but since $L^2$ is often infinite-dimensional, the spectrum can be made up of more than just eigenvalues. The set of all such $\lambda \in \sigma(K)$ that are not eigenvalues is referred to as the {\em essential spectrum} of the operator $K$. The following is the characterization of the spectrum given by Weyl adopted for the present situation \cite{hislop1996essential}.

\paragraph{\bf Weyl's Criterion:} $\lambda \in \sigma(K)$ if and only if there exists a sequence $\{\psi_n\}_{n = 1}^\infty \subset L^2$ such that $\|\psi_n\|_2 = 1$ for all $n \geq 1$ and 
\begin{equation}
	\lim_{n \to \infty} \|K\psi_n - \lambda\psi_k\|_2 = 0.
\end{equation}  

Clearly, if $(\lambda,f_\lambda)$ is an eigenpair of $K$ then we may simply take $\psi_n = f_\lambda$ for all $n \geq 1$ to satisfy Weyl's criterion. When $\lambda$ is not an eigenvalue, then $\{\psi_n\}_{n = 1}^\infty$ provides a sequence of approximate eigenfunctions to $\lambda$. The following result proves the lattice structure of the Koopman operator on $L^2$. Furthermore, the proof uses Weyl's criterion to show that even if the product of eigenfunctions is not an eigenfunction itself, a sequence of approximate eigenfunctions can be constructed to converge pointwise to this product.   

\begin{thm}\label{thm:Main}
	Given a map $T: M\to M$ with $M$ a topological space, suppose its Koopman operator $K:L^2\to L^2$ is well-defined on $L^2 = L^2(M,\mu)$ for some probability measure $\mu$ on $M$. Then, if $\lambda,\eta \in \sigma(K)$, we have that $\lambda\eta \in \sigma(K)$.
\end{thm}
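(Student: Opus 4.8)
The plan is to verify Weyl's criterion directly for the candidate spectral value $\lambda\eta$. By hypothesis there exist unit-norm sequences $\{f_n\}, \{g_n\} \subset L^2$ with $\|Kf_n - \lambda f_n\|_2 \to 0$ and $\|Kg_n - \eta g_n\|_2 \to 0$; write $Kf_n = \lambda f_n + r_n$ and $Kg_n = \eta g_n + q_n$ with $\|r_n\|_2, \|q_n\|_2 \to 0$. The only special structure of a composition operator I intend to use is the pointwise multiplicativity $[K(fg)](x) = f(Tx)g(Tx) = [Kf](x)[Kg](x)$, which gives $K(f_n g_n) - \lambda\eta f_n g_n = \lambda f_n q_n + \eta g_n r_n + r_n q_n$. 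The products $f_n g_n$ are thus the natural approximate eigenfunctions for $\lambda\eta$, and the whole difficulty is the one highlighted in the introduction: these products (and the three cross terms) need not lie in $L^2$, and even when they do their norms are not controlled by $\|f_n\|_2\|g_n\|_2$, so the renormalised sequence might degenerate.

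To handle membership in $L^2$ I would first replace the Weyl sequences by ones made of bounded functions. For $R>0$ the truncation $f_n\mathbf{1}_{\{|f_n|\le R\}}$ converges to $f_n$ in $L^2$ as $R\to\infty$ (dominated convergence, using that $\mu$ is finite), and since $K$ is bounded the renormalised truncations again form a Weyl sequence for $\lambda$, now lying in $L^\infty\subset L^2$; likewise for $\eta$. After this reduction $f_ng_n\in L^\infty\subset L^2$ and each cross term can be estimated by a bound of the form $\|f_nq_n\|_2\le\|f_n\|_\infty\|q_n\|_2$. The sup-norms produced by truncation may grow with $n$, so the truncation level at stage $n$ must be tied to the sizes of the defects $\|r_n\|_2,\|q_n\|_2$ in a diagonal fashion, arranged so that (truncation level)$\times$(defect)$\to0$ and hence $\|K(f_ng_n)-\lambda\eta f_ng_n\|_2\to0$.

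The step I expect to be the main obstacle is the lower bound $\liminf_n\|f_ng_n\|_2>0$, needed so that $\psi_n:=f_ng_n/\|f_ng_n\|_2$ is a legitimate unit-norm approximate eigenfunction. When $\lambda=\eta$ this is automatic on taking $g_n=f_n$, since $\|f_n^2\|_2=\|f_n\|_4^2\ge\|f_n\|_2^2=1$ for a probability measure; the same argument applied to $f_n^k$ handles every power $\lambda^k$ of a single spectral value and produces, as a by-product, approximate eigenfunctions converging pointwise to $f_\lambda^k$ even when $f_\lambda^k\notin L^2$. For a genuine product of distinct spectral values I would exploit the behaviour of sublevel sets under $T$: for eigenfunctions $f_\lambda,g_\eta$ one has $T^{-1}\{|f_\lambda g_\eta|\le t\}=\{|f_\lambda g_\eta|\le t/|\lambda\eta|\}$, so truncating $f_\lambda g_\eta$ at thresholds $t_n\to\infty$ along which the truncated mass $\int_{\{|f_\lambda g_\eta|\le t\}}|f_\lambda g_\eta|^2\,\drm\mu$ grows slowly makes the truncated product nearly $T$-covariant and yields a Weyl sequence for $\lambda\eta$ converging pointwise to $f_\lambda g_\eta$, precisely the feature advertised before the theorem. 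The essential-spectrum case, where $\lambda$ or $\eta$ need not be an eigenvalue, is treated by the same scheme with the bounded renormalised Weyl sequences in place of the eigenfunctions and the covariance identity holding only up to the controlled defects above. Collecting the two estimates, $\{\psi_n\}$ witnesses $\lambda\eta\in\sigma(K)$ through Weyl's criterion.
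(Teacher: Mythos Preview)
Your proposal shares the paper's architecture---Weyl's criterion, the multiplicativity $K(fg)=(Kf)(Kg)$, truncation to force boundedness, and a diagonal choice of parameters---but it does not close the gap you yourself flag as ``the main obstacle'': the lower bound $\liminf_n\|f_ng_n\|_2>0$. Your argument for $\lambda=\eta$ via $\|f_n^2\|_2\ge\|f_n\|_2^2$ is fine, but the general case is not settled. The sublevel-set covariance $T^{-1}\{|f_\lambda g_\eta|\le t\}=\{|f_\lambda g_\eta|\le t/|\lambda\eta|\}$ only addresses genuine eigenfunctions, and even there it fails outright when $f_\lambda g_\eta=0$ $\mu$-a.e.\ (nothing forbids two nonzero Koopman eigenfunctions having disjoint supports; take $T$ the identity on a two-point space). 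For the essential spectrum you offer only ``the same scheme'', which is not an argument: the approximate covariance carries defects comparable to $r_n,q_n$, and you give no mechanism preventing $\|f_ng_n\|_2\to0$ faster than those defects vanish.

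The paper's device for this step is different and worth noting: in addition to capping $|f_n|,|g_n|$ above at $m$, it also truncates \emph{from below}, replacing values of $f_n$ in $[-1/k,1/k]$ by $\pm1/k$. The resulting $f_{n,m,k},g_{n,m,k}$ satisfy $1/k\le|f_{n,m,k}|,|g_{n,m,k}|\le m$ pointwise, so $\|f_{n,m,k}g_{n,m,k}\|_2\ge1/k^2$ automatically and the lower-bound issue disappears. The price is an additional truncation error, which the paper controls by first passing (via Egorov) to almost-uniform convergence of $Kf_n-\lambda f_n$ on sets of nearly full measure, and then taking $m=k^3$ and $n=n_k$ in a diagonal sweep so that $m/k^4+k^2/m\to0$. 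Your upper-truncation and diagonal machinery is compatible with this idea, but without the lower truncation (or a genuine substitute for it) the proof is incomplete.
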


The proof of this result is left to Section~\ref{sec:Proof} below. First, we immediately arrive at the following corollary.

\begin{cor}
	If $K:L^2 \to L^2$ is a bounded operator, then every $\lambda \in \sigma(K)$ satisfies $|\lambda| \leq 1$.
\end{cor}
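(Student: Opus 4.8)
The plan is to bootstrap Theorem~\ref{thm:Main} into a statement about powers of spectral values and then invoke the elementary fact that the spectrum of a bounded operator is a bounded subset of $\C$. Concretely, first I would fix $\lambda \in \sigma(K)$ and apply Theorem~\ref{thm:Main} with $\eta = \lambda$ to conclude $\lambda^2 \in \sigma(K)$. A routine induction then gives $\lambda^n \in \sigma(K)$ for every integer $n \geq 1$: assuming $\lambda^{n-1} \in \sigma(K)$, Theorem~\ref{thm:Main} with the pair $(\lambda^{n-1}, \lambda)$ yields $\lambda^n = \lambda^{n-1}\cdot\lambda \in \sigma(K)$.

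Next I would recall that, since $K:L^2 \to L^2$ is assumed bounded, its spectrum satisfies $\sigma(K) \subseteq \{z \in \C : |z| \leq \|K\|\}$; this is the standard estimate $r(K) \leq \|K\|$ for the spectral radius, which follows from the Neumann series for $(K - z)^{-1}$ converging whenever $|z| > \|K\|$. In particular $\sigma(K)$ is a bounded set.

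Finally, I would argue by contradiction: if some $\lambda \in \sigma(K)$ had $|\lambda| > 1$, then $|\lambda^n| = |\lambda|^n \to \infty$ as $n \to \infty$, so for $n$ large enough $|\lambda^n| > \|K\|$ and hence $\lambda^n \notin \sigma(K)$, contradicting the previous paragraph. Therefore every $\lambda \in \sigma(K)$ obeys $|\lambda| \leq 1$, which is the claim. I do not anticipate any real obstacle here — the only points requiring a word of justification are the induction (immediate from Theorem~\ref{thm:Main}) and the boundedness of $\sigma(K)$ (a textbook fact about bounded operators), and both are entirely standard.
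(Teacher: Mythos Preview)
Your argument is correct and follows the same approach as the paper's own proof: both use Theorem~\ref{thm:Main} to get $\lambda^n \in \sigma(K)$ for all $n \geq 1$ and then derive a contradiction from the boundedness of $\sigma(K)$ when $|\lambda| > 1$. Your version is slightly more explicit about the induction step and about why $\sigma(K)$ is bounded (via the spectral radius estimate), but the strategy is identical.
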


\begin{proof}
	From the fact that $K$ is assumed to be a bounded operator, it follows that $\sigma(K)$ is a compact subset of $\mathbb{C}$ and therefore bounded itself. Then, if we assume that $\lambda \in \sigma(K)$ satisfies $|\lambda| > 1$, from Theorem~\ref{thm:Main} we have that $\lambda^n \in \sigma(K)$ for all $n \geq 1$. Since $|\lambda| > 1$, it follows that $|\lambda^n|$ can be made arbitrarily large by increasing $n$, giving that $\sigma(K)$ is not bounded. This is a contradiction, thus showing no $\lambda \in \sigma(K)$ can satisfy $|\lambda| \leq 1$.
\end{proof}

\section{Lack of lattice structure for stochastic systems}\label{sec:Stochastic}

The main result of this letter proves the lattice structure of the Koopman operator associated to discrete-time deterministic dynamical systems. However, the Koopman operator can be defined for general Markovian processes in both continuous and discrete time \cite{bramburger2024auxiliary}, which begs the question as to whether one would expect a similar spectral structure for other Koopman operators. For ordinary differential equations (deterministic continuous-time processes) the Koopman operator composes scalar-valued functions with the flow operator \cite{bevanda2021koopman}, resulting in a continuous one-parameter semigroup. In this case the Koopman operator is again acting by function composition, and so it is not difficult to extend Theorem~\ref{thm:Main} to the deterministic continuous-time setting (under appropriate well-posedness assumptions on the semigroup).

The situation for stochastic dynamics appears to be more nuanced. First, if we let $x_n$ denote the state of a discrete-time stochastic process at time $n\in\mathbb{N}$ in a state space $M$, then the {\em stochastic Koopman operator} \cite{wanner2022robust} $K$ acts on scalar functions $f:M \to \mathbb{C}$ by
\begin{equation}
    Kf(x) = \mathbb{E}[f(x_{n+1})\ |\ x_n = x].    
\end{equation}
Much like for deterministic dynamics, one sees that the Koopman operator moves the system forward in time inside of the function $f$, but now this is being done in expectation. The following example provides a demonstration for which the spectrum of the stochastic Koopman operator does not have a lattice structure.

\begin{exmp}[Markov chains having no lattice structure]\normalfont 
    Consider a Markov chain on $m \geq 2$ states. The state space can be identified as $M = \{1,\dots,m\}$, while we denote the transition probabilities $p_{i,j} = \mathbb{P}(x_{n+1} = j\ |\ x_n = i)$ for each $i,j = 1,\dots,m$. In this case the Koopman operator acts on functions $f \in L^2(M,\mu) \cong \mathbb{R}^m$ by \cite{klus2024transfer}  
    \begin{equation}
        Kf(i) = \sum_{j = 1}^m p_{i,j}f(j), \quad \forall i = 1,\dots, m.
    \end{equation}
    Thus, the Koopman operator acts equivalently to the linear map $v \mapsto Pv$ for all $v \in \mathbb{R}^m$ with $P = [p_{i,j}]_{i,j = 1}^m$ the matrix of all transition probabilities. Moreover, the spectrum of the Koopman operator is comprised exclusively of the eigenvalues of the matrix $P$, of which there are only finitely many. 

    Of course, eigenvalues of matrices need not have a lattice structure. Precisely, if $P$ has even a single eigenvalue $\lambda$ for which $|\lambda| \neq 1$, then $\sigma(K)$ cannot have a lattice structure. Indeed, a lattice structure would imply that $\lambda^r$ is an eigenvalue of $P$ for every $r \geq 1$, but since $|\lambda| < 1$ we have that $\{\lambda^r\}_{r\geq 1}$ contains infinitely many distinct values. Since $P$ can only have a finite number of eigenvalues, a lattice structure is not possible here. As an example, the transition matrix 
    \begin{equation}
        P = \begin{bmatrix}
            0.9 & 0.1 \\ 0.5 & 0.5   
        \end{bmatrix}
    \end{equation}
    associated to a 2 state Markov chain gives a Koopman spectrum of $\sigma(K) = \{\frac{2}{5},1\}$, which does not have a lattice structure.
\end{exmp} 

The previous example illustrates that the spectrum of the stochastic Koopman operator does not necessarily have a lattice structure. What therefore remains as an outstanding problem for future work is identifying what stochastic processes, if any, result in a Koopman operator having such a structured spectrum.

\section{Proof of Theorem~\ref{thm:Main}}\label{sec:Proof}

In this section we will prove Theorem~\ref{thm:Main}. Suppose $\lambda,\eta \in \sigma(K)$ and let $\{f_n\}_{n = 1}^\infty$, $\{g_n\}_{n = 1}^\infty\subset L^2$ be sequences satisfying Weyl's criterion for $\lambda$ and $\eta$, respectively. By assumption the sequences $\{Kf_n - \lambda f_n\}_{n = 1}^\infty$, $\{Kg_n-\eta g_n\}_{n = 1}^\infty$ converge to zero in the $L^2$ norm, which in turn gives that they also converge to zero in measure, i.e. for all $\varepsilon > 0$ we have 
\begin{equation}
    \begin{split}
        \lim_{n\to\infty} \mu(\{x\in M\ |\ |Kf_n(x) - \lambda f_n(x)| \geq \varepsilon\}) &= 0,\\
         \lim_{n\to\infty} \mu(\{x\in M\ |\ |Kg_n(x) - \eta g_n(x)| \geq \varepsilon\}) &= 0.
    \end{split}
\end{equation} 
This convergence in measure implies that there exists a subsequence which converges pointwise $\mu$-almost everywhere, and since $\mu$ is assumed to be a probability measure on $M$, Egorov's theorem gives that this pointwise convergence is almost uniform. We may therefore assume (after potentially moving to a subsequence) that the sequences $\{Kf_n - \lambda f_n\}_{n = 1}^\infty$, $\{Kg_n-\eta g_n\}_{n = 1}^\infty$ converge to $0$ pointwise in $x \in M$ and furthermore are such that for all $\varepsilon > 0$ there exists $\mu$-measurable sets $E_1,E_2 \subset M$ with $\mu(E_1),\mu(E_2) < \varepsilon$ so that $Kf_n - \lambda f_n$ converges to 0 uniformly on $M\setminus E_1$ and $Kg_n - \eta g_n$ converges uniformly to zero on $M\setminus E_2$.   

Let us now define the bounded functions $f_{n,m,k}$ and $g_{n,m,k}$, derived from $f_n$ and $g_n$, by
\begin{equation}\label{Fns}
    \begin{split}
	f_{n,m,k}(x) &= \begin{cases}
		m & f_n(x) \geq m \\
		f_n(x) & \frac{1}{k} < f_n(x) < m \\
		\frac{1}{k} & 0 \leq f_n(x) \leq \frac{1}{k} \\
		-\frac{1}{k} & -\frac{1}{k} \leq f_n(x) < 0 \\
		f_n(x) & -m < f_n(x) < -\frac{1}{k} \\
		-m & f_n(x) \leq -m \\
	\end{cases} \\ 
	g_{n,m,k}(x) &= \begin{cases}
		m & g_n(x) \geq m \\
		g_n(x) & \frac{1}{k} < g_n(x) < m \\
		\frac{1}{k} & 0 \leq g_n(x) \leq \frac{1}{k} \\
		-\frac{1}{k} & -\frac{1}{k} \leq g_n(x) < 0 \\
		g_n(x) & -m < g_n(x) < -\frac{1}{k} \\
		-m & g_n(x) \leq -m \\
	\end{cases}
    \end{split}
\end{equation}
for each $x \in M$ and $n,m,k \geq 1$. For each $1 \leq m < \infty$ we have that $|f_{n,m,k}(x)g_{n,m,k}(x)| \leq m^2$, thus giving that the pointwise product $f_{n,m,k}g_{n,m,k}$ belongs to $L^2$ since $\mu(M) = 1$. Furthermore, $|f_{n,m,k}(x)g_{n,m,k}(x)| \geq \frac{1}{k^2}$, giving that $\|f_{n,m,k}g_{n,m,k}\|_2 \geq \frac{1}{k^2} > 0$, for each $k \geq 1$, again coming from the fact that $\mu$ is a probability measure on $M$.

We will show that the nonzero unit vectors $f_{n,m,k}g_{n,m,k}/\|f_{n,m,k}g_{n,m,k}\|_2 \in L^2$ can be used to create a sequence that satisfies Weyl's criterion for a sequence of appropriately chosen $(n,m,k) \in\mathbb{N}^3$. First, we have that
\begin{equation}\label{WeylSeq}
	\begin{split}
		\frac{1}{\|f_{n,m,k}g_{n,m,k}\|_2} &\|Kf_{n,m,k}g_{n,m,k} - \lambda\eta f_{n,m,k}g_{n,m,k}\|_2 \\
		 &= \frac{1}{\|f_{n,m,k}g_{n,m,k}\|_2} \|f_{n,m,k}\circ T\cdot g_{n,m,k}\circ T - \lambda\eta f_{n,m,k}g_{n,m,k}\|_2 \\
		 &\leq \frac{1}{\|f_{n,m,k}g_{n,m,k}\|_2} \|(f_{n,m,k}\circ T - \lambda f_{n,m,k}) \cdot g_{n,m,k}\circ T \|_2 \\
		 & \quad + \frac{1}{\|f_{n,m,k}g_{n,m,k}\|_2} \|(g_{n,m,k}\circ T - \eta g_{n,m,k}) \cdot \lambda f_{n,m,k} \|_2 \\ 
		 &\leq k^2m\|f_{n,m,k}\circ T - \lambda f_{n,m,k}\|_2 + |\lambda|k^2m\|g_{n,m,k}\circ T - \eta g_{n,m,k}\|_2
	\end{split}
\end{equation}  
where $\circ$ denotes function composition and we have used the fact that $|f_{n,m,k}(x)|,|g_{n,m,k}\circ T (x)| \leq m$ for all $x \in M$. We now proceed to bound $\|f_{n,m,k}\circ T - \lambda f_{n,m,k}\|_2$ and $\|g_{n,m,k}\circ T - \lambda g_{n,m,k}\|_2$ appropriately. Both terms can be bounded in exactly the same way and therefore we will only provide the full details for one.

Let us focus on bounding $\|f_{n,m,k}\circ T - \lambda f_{n,m,k}\|_2$ in full detail. First, from the almost uniform pointwise convergence of the sequence $\{Kf_n - \lambda f_n\}_{n = 1}^\infty$ to zero, it follows that for each $m \geq 1$ there exists $\mu$-measurable sets $M_m \subseteq M$ such that $(Kf_n - \lambda f_n)$ converges uniformly on $M_m$ and 
\begin{equation}
	\mu(M\setminus M_m) \leq \frac{1}{4m^6}.
\end{equation} 
Furthermore, from the uniform convergence on $M_m$, for each $k \geq 1$ there exists $N_{f,k} \geq 1$ such that 
\begin{equation}\label{Uniformf}
	|Kf_n(x) - \lambda f_n(x)| = |f_n(Tx) - \lambda f_n(x)| < \frac{1}{k^6}
\end{equation}
for all $x \in M_m$ and $n \geq N_{f,k}$. For any set $A \subset M$ we will adopt the shorthand 
\begin{equation}
	\|f\|_A := \bigg(\int_A |f(x)|^2 \drm \mu\bigg)^\frac{1}{2}
\end{equation}
to denote the $L^2$ norm of the function $f$ restricted to any $\mu$-measurable set $A \subseteq M$. Then, combining the above with the fact that $|f_{n,m,k}(Tx) - \lambda f_{n,m,k}(x)| \leq 2m$ for all $x \in M\setminus M_m$ we have that 
\begin{equation}\label{fBnd}
	\begin{split}
		\|f_{n,m,k}\circ T - \lambda f_{n,m,k}\|_2 &\leq \|f_{n,m,k}\circ T - \lambda f_{n,m,k}\|_{M_m} + \|f_{n,m,k}\circ T - \lambda f_{n,m,k}\|_{M\setminus M_m} \\	
		&\leq \|f_{n,m,k}\circ T - \lambda f_{n,m,k}\|_{M_m} + 2m\sqrt{\mu(M\setminus M_m)} \\
		&\leq \|f_{n,m,k}\circ T - \lambda f_{n,m,k}\|_{M_m}  + \frac{1}{m^2},
	\end{split}
\end{equation}
for all $n,m,k \geq 1$. Using \eqref{Uniformf} we find that on $M_m$ we have
\begin{equation}
	|f_{n,m,k}(Tx) - \lambda f_{n,m,k}(x)| \leq |f_n(Tx) - \lambda f_n(x)| < \frac{1}{k^6}
\end{equation}
for each $n \geq N_{f,k}$, and so
\begin{equation}
	\|f_{n,m,k}\circ T - \lambda f_{n,m,k}\|_{M_m} < \frac{1}{k^6}\sqrt{\mu(M_m)} \leq \frac{1}{k^6}
\end{equation}
since $M_m \subseteq M$, giving that $\mu(M_m) \leq 1$. Combining the above with \eqref{fBnd} gives that
\begin{equation}\label{fBnd2}
	k^2m\|f_{n,m,k}\circ T - \lambda f_{n,m,k}\|_2 \leq \frac{m}{k^4} + \frac{k^2}{m}	
\end{equation}
for all $n \geq N_{f,k}$ and $k,m \geq 1$. 

We may repeat the above for the sequence $\{Kg_n - \eta g_n\}_{n = 1}^\infty$ to find that for each $k \geq 1$ there exists $N_{g,k} \geq 1$ such that 
\begin{equation}\label{gBnd2}
	|\lambda| k^2m\|f_{n,m,k}\circ T - \lambda f_{n,m,k}\|_2 \leq \frac{|\lambda|m}{k^4} + \frac{|\lambda|k^2}{m},	
\end{equation}	
for all $n \geq N_{g,k}$ and $k,m \geq 1$. Putting the bounds \eqref{fBnd2} and \eqref{gBnd2} into \eqref{WeylSeq} gives that  
\begin{equation}\label{WeylBnd}
	\frac{1}{\|f_{n,m,k}g_{n,m,k}\|_2} \|Kf_{n,m,k}g_{n,m,k} - \lambda\eta f_{n,m,k}g_{n,m,k}\|_2 \leq (1 + |\lambda|)\bigg(\frac{m}{k^4} + \frac{k^2}{m}\bigg).
\end{equation}
Let us define $n_k = \max\{N_{f,k},N_{g,k}\}$ and define 
\begin{equation}
	\psi_k := \frac{1}{\|f_{n_k,k^3,k}g_{n_k,k^3,k}\|_2}f_{n_k,k^3,k}g_{n_k,k^3,k} \in L^2 
\end{equation}
for each $k \geq 1$. That is, we have taken $n = n_k$ and $m = k^3$ for each $k \geq 1$. Clearly $\|\psi_k\|_2 = 1$ for all $k \geq 1$, and from \eqref{WeylBnd} we have that 
\begin{equation}
	\begin{split}
		\|K\psi_k - \lambda\eta\psi_k\|_2 &= \frac{1}{\|f_{n_k,k^3,k}g_{n_k,k^3,k}\|_2} \|Kf_{n_k,k^3,k}g_{n_k,k^3,k} - \lambda\eta f_{n_k,k^3,k}g_{n_k,k^3,k}\|_2 \\ 
		&\leq (1 + |\lambda|)\bigg(\frac{k^3}{k^4} + \frac{k^2}{k^3}\bigg)  \\
		&\leq \frac{2(1 + |\lambda|)}{k}.
	\end{split}
\end{equation}
Therefore, taking $k \to \infty$ gives that $\|K\psi_k - \lambda\eta\psi_k\|_2 \to 0$, showing that $\psi_k$ is a sequence satisfying Weyl's criterion, and so $\lambda\eta \in \sigma(K)$. This concludes the proof.

\section*{Acknowledgements}

\noindent This work was partially supported by an NSERC Discovery Grant and the Fondes de Recherche du Qu\'ebec – Nature et Technologies (FRQNT).

\bibliographystyle{amsplain}
\bibliography{references.bib}

\end{document}